\newtheorem{thm}{Theorem}
\newtheorem{lem}[thm]{Lemma}
\newcommand{\thistheoremname}{}
\newtheorem*{genericthm*}{\thistheoremname}
\newenvironment{namedtheorem}[1]
{\renewcommand{\thistheoremname}{#1}%
	\begin{genericthm*}}
	{\end{genericthm*}}
\newcommand{\QQ}{\mathbb{Q}}
\newcommand{\ZZ}{\mathbb{Z}}
\newcommand{\FF}{\mathbb{F}}
\newcommand{\cP}{\mathcal{P}}
\newcommand{\hermitianpol}[1]{\prescript{2}{}{A}_{{#1}}}
\newcommand{\elliptic}[1]{\prescript{2}{}{D}_{{#1}}}
\newcommand{\qbin}[2]{\genfrac{[}{]}{0pt}{}{{#1}}{{#2}}_q}
\DeclareMathOperator{\GL}{GL}
\newcommand{\norm}[1]{\left\lVert#1\right\rVert}
\newcommand{\abs}[1]{\lvert#1\rvert}
\title{Nontrivial $t$-designs in polar spaces exist for all $t$}
\author{Charlene Wei\ss}
\address{Department of Mathematics, Paderborn University, Warburger Str.\ 100, 33098 Paderborn, Germany.}
\email[C. Wei\ss]{chweiss@math.upb.de}
\dedicatory{Dedicated to the memory of Kai-Uwe Schmidt.}
\thanks{The author is funded by the Deutsche Forschungsgemeinschaft (DFG, German Research Foundation) -- Project-ID 491392403 -- TRR 358.}
\date{November 14, 2023 (revised June 25, 2024)}
\subjclass[2020]{Primary 51E05; Secondary 05D40} 
\keywords{designs, polar spaces, existence, KLP theorem} 
\let\origmaketitle\maketitle
\def\maketitle{
	\begingroup
	\def\uppercasenonmath##1{\large}
	\let\MakeUppercase\relax
	\origmaketitle
	\endgroup
}
\begin{document}
\begin{abstract}
	A finite classical polar space of rank $n$ consists of the totally isotropic subspaces of a finite vector space over $\FF_q$ equipped with a nondegenerate form such that $n$ is the maximal dimension of such a subspace. A $t$-$(n,k,\lambda)$ design in a finite classical polar space of rank $n$ is a collection~$Y$ of totally isotropic $k$-spaces such that each totally isotropic $t$-space is contained in exactly $\lambda$ members of $Y$. Nontrivial examples are currently only known for $t\leq 2$. We show that $t$-$(n,k,\lambda)$ designs in polar spaces exist for all $t$ and $q$ provided that $k>\frac{21}{2}t$ and $n$ is sufficiently large enough. The proof is based on a probabilistic method by Kuperberg, Lovett, and Peled, and it is thus nonconstructive.
\end{abstract}

\maketitle

\section{Introduction}
A \emph{$t$-$(v,k,\lambda)$ combinatorial design} (or just \emph{combinatorial $t$-design}) is a collection $Y$ of $k$-subsets of a $v$-set $V$ such that each $t$-subset of $V$ lies in exactly $\lambda$ members of $Y$. Teirlinck~\cite{Teirlinck} obtained the celebrated result that nontrivial combinatorial $t$-designs exist for all $t$. It is well known that combinatorics of sets can be regarded as the limiting case $q\to 1$ of combinatorics of vector spaces over a finite field $\FF_q$ with $q$ elements. Following Delsarte~\cite{Del1978} and Cameron~\cite{Cam1974}, a \emph{$t$-$(v,k,\lambda)$ design over $\FF_q$} is a collection $Y$ of $k$-dimensional subspaces (or $k$-spaces for short) of $\FF_q^v$ such that each $t$-dimensional subspace of $\FF_q^v$ lies in exactly $\lambda$ members of $Y$. It was shown in~\cite{FLV} that nontrivial $t$-$(v,k,\lambda)$ designs over $\FF_q$ exist for all~$t$ and $q$ if $k>12(t+1)$ and $v$ is sufficiently large enough. These designs can be seen as $q$-analogs of combinatorial designs of type $A_{v-1}$ since $\FF_q^v$ together with the action of the general linear group $\GL(v,q)$ is of this type.

We look at $q$-analogs of combinatorial designs in finite vector spaces of type $\hermitianpol{2n-1}$, $\hermitianpol{2n}$, $B_n$, $C_n$, $D_n$, and $\elliptic{n+1}$ (using the notation of~\cite{Carter}). In all these cases, the space $V$ is equipped with a nondegenerate form and the relevant groups are $U(2n,q)$, $U(2n+1,q)$, $O(2n+1,q)$, $Sp(2n,q)$, $O^+(2n,q)$, and $O^-(2n+2,q)$, respectively, where $q$ is a square number in the case of $\hermitianpol{2n-1}$ and $\hermitianpol{2n}$. The chosen notation means that the maximal totally isotropic subspaces of $V$ have dimension $n$ (see Table~\ref{table:polarspaces}). A \emph{finite classical polar space} (or just \emph{polar space}) of \emph{rank}~$n$ is the collection of all totally isotropic subspaces with respect to a given form. We denote the polar spaces by the same symbol as the type of the underlying vector space. A \emph{$t$-$(n,k,\lambda)$ design in a polar space} $\cP$ of rank~$n$ is a collection~$Y$ of $k$-dimensional totally isotropic subspaces of $\cP$ such that each $t$-dimensional totally isotropic subspace of $\cP$ lies in exactly $\lambda$ members of $Y$.

A $1$-$(n,n,1)$ design in a polar space is also known as a \emph{spread}, whose existence question has been studied for decades, but is still not fully resolved (see~\cite[\S~7.4]{HirschfeldThas} for the current status). In~\cite{SW_Steiner} (see also~\cite[\S~3--4]{WeissPhD}), it was shown that nontrivial $t$-$(n,n,1)$ designs in polar spaces, also known as $t$-Steiner systems, do not exist except in some corner cases. According to~\cite[\S~5.3]{LansdownPhD}, De~Bruyn and Vanhove firstly announced the existence of a $2$-$(3,3,\lambda)$ design with $\lambda>1$ in the parabolic polar space $B_3$ for $q=3$ in conference presentations. Moreover, $2$-$(3,3,\lambda)$ designs with $\lambda>1$ in $B_3$ for $q=3,5,7,11$ were found in~\cite[\S~5.3]{LansdownPhD} (see also~\cite{BambergLansdownLee}). In \cite{KiermaierSchmidtWassermann}, Kiermaier, Schmidt, and Wassermann found $2$-$(n,k,\lambda)$ designs in various polar spaces of small rank $n$ with $2<k\leq n$, $\lambda>1$, and $q=2,3$. No nontrivial $t$-$(n,k,\lambda)$ designs in polar spaces are presently known for $k<n$ and $t\geq 3$.

We prove the following existence result.

\begin{thm}\label{thm:main}
	Let $\cP$ be a polar space of rank $n$ and let $t$ and $k$ be positive integers satisfying $k>\frac{21}{2}t$ and $n\geq ck^2$ for a large enough constant $c>0$ independent of all other parameters. Then there exists a $t$-$(n,k,\lambda)$ design in $\cP$ of size at most $q^{21nt}$.
\end{thm}

We remark that the proof is nonconstructive and based on a probabilistic method developed by Kuperberg, Lovett, and Peled~\cite{KLP}. This method cannot explicitly determine the smallest value of $n$ that guarantees existence. We also note that this method is quite different to the probabilistic approach taken by Keevash, Sah, and Sawhney \cite{KeevashSahSawhney} to show the existence of designs over $\FF_q$. Namely, whereas their technique includes the case $\lambda=1$, the KLP method requires $\lambda\geq q^{Cnt}$ with $C>0$ and thus excludes small values for $\lambda$. So far, it is unknown whether \cite{KeevashSahSawhney} can also be applied to designs in polar spaces.

The paper is organized as followed. In Section~\ref{sec:PS}, we will briefly introduce polar spaces. Afterwards, we will recall the KLP theorem from Kuperberg, Lovett, and Peled in Section~\ref{sec:KLP}. The proof of Theorem~\ref{thm:main} is then given in Section~\ref{sec:proofmainresult}.

\section{Polar spaces}\label{sec:PS}
In this section, we will shortly give some basic facts about polar spaces.

Let~$V$ be a vector space over a finite field with~$q$ elements  equipped with a nondegenerate form~$f$. A subspace $U$ of $V$ is called \emph{totally isotropic} if \mbox{$f(u,w)=0$} for all $u,w\in U$, or in the case of a quadratic form, if $f(u)=0$ for all $u\in U$. A \emph{finite classical polar space} (or just \emph{polar space}) with respect to a form~$f$ consists of all totally isotropic subspaces of~$V$. It is well known that all maximal (with respect to the dimension) totally isotropic spaces in a polar space have the same dimension, which is called the \emph{rank} of the polar space. A finite classical polar space~$\cP$ of rank~$n$ has the \emph{parameter}~$e$ if every $(n-1)$-space in~$\cP$ lies in exactly $(q^{e+1}+1)$ $n$-spaces of~$\cP$. Up to isomorphism, there are exactly six finite classical polar spaces of rank~$n$, which are listed together with their parameter~$e$ in Table~\ref{table:polarspaces}. We note that $q$ has to be a square number for the Hermitian polar spaces.
For further background on polar spaces, we refer to~\cite{Taylor},~\cite[\S~9.4]{BrouwerCohenNeumaier}, and~\cite[\S~4.2]{Bal2015}. (We emphasize that in this paper, the term dimension is used in the usual sense as vector space dimension, not as projective dimension sometimes used by geometers.)
\begin{table}[ht!]
	\caption{List of all six finite classical polar spaces.}
	\centering
	\renewcommand*{\arraystretch}{1.25}
	\begin{tabular}{ccccccc}
		\toprule[0.4mm]
		\textbf{name} & \textbf{form} & \textbf{type} & \textbf{group} & $\dim(V)$  & $e$\\
		\midrule[0.4mm]
		Hermitian & Hermitian & $\hermitianpol{2n-1}$ & $U(2n,q)$ & $2n$ & $-1/2$\\ 
		Hermitian & Hermitian & $\prescript{2}{}{A}_{2n}$ & $U(2n+1,q)$ & $2n+1$ & $1/2$\\
		symplectic & alternating & $C_n$ & $Sp(2n,q)$ & $2n$ &  $0$\\
		hyperbolic & quadratic & $D_n$& $O^+(2n,q)$ & $2n$ & $-1$\\
		parabolic & quadratic & $B_n$ & $O(2n+1,q)$ & $2n+1$ & $0$\\
		elliptic & quadratic & $\prescript{2}{}{D}_{n+1}$ & $O^-(2n+2,q)$ & $2n+2$ & $1$\\
		\bottomrule[0.4mm]
	\end{tabular}
	\label{table:polarspaces}
\end{table}

We close this section by stating some well-known counting results that we later need, but first we define the \emph{$q$-binomial coefficient} $\qbin{n}{k}$ by
\[
\qbin{n}{k}=\prod_{j=1}^k \frac{q^{n-j+1}-1}{q^j-1}
\]
for nonnegative integers $n,k$. 

\begin{lem}[{\cite[Lemmas~9.3.2, 9.4.1, 9.4.2]{BrouwerCohenNeumaier}}]~\label{lem:sum_numspaces}
	\begin{enumerate}[(a)]
		\item The number of $k$-dimensional subspaces of an $m$-dimensional vector space over $\FF_q$ is given by $\qbin{m}{k}$.
		\item Let $W$ be an $m$-dimensional vector space over $\FF_q$ and let $V$ be a $t$-dimensional subspace of~$W$. Then the number of $k$-dimensional subspaces $U$ of $W$ with $V\subseteq U\subseteq W$ is given by $\qbin{m-t}{k-t}$.
		\item Let $\cP$ be a polar space of rank~$n$. Then the number of $k$-spaces in $\cP$ is given by
		\begin{align}\label{eq:numKspaces}
			\qbin{n}{k} \prod_{i=0}^{k-1} (q^{n-i+e}+1).
		\end{align}
		\item Let $\cP$ be a polar space of rank~$n$ and let $V$ be a $t$-space in $\cP$. Then the number of $k$-spaces~$U$ in $\cP$ with $V\subseteq U$ is given by
		\begin{align}\label{eq:numKsubspaces}
		\qbin{n-t}{k-t}\prod_{i=0}^{k-t-1} (q^{n-t-i+e}+1).
		\end{align}
	\end{enumerate}	
\end{lem}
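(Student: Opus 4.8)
The four parts split into an elementary pair (a)--(b) and a geometric pair (c)--(d). My plan is to treat (a) and (b) by standard linear algebra over $\FF_p$, and to derive (c) and (d) from the \emph{residue} construction for polar spaces together with a double-counting recursion, reducing both to the classical count of isotropic points.

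For (a) I would count ordered tuples. The number of ordered linearly independent $k$-tuples in $\FF_p^m$ is $\prod_{j=0}^{k-1}(p^m-p^j)$, since the $(j+1)$-st vector must avoid the span of the previous $j$. Each $k$-space arises from exactly $\prod_{j=0}^{k-1}(p^k-p^j)$ such tuples (its ordered bases), so the number of $k$-spaces is the quotient $\prod_{j=0}^{k-1}\frac{p^m-p^j}{p^k-p^j}$; cancelling $p^j$ from each factor and reindexing gives $\pbin{m}{k}$. For (b), passing to the quotient $W/V\cong\FF_p^{m-t}$, the subspaces $U$ with $V\subseteq U\subseteq W$ correspond bijectively to the subspaces $U/V$ of $W/V$, with $\dim(U/V)=\dim U-t$; applying (a) to the $(k-t)$-spaces of $W/V$ yields $\pbin{m-t}{k-t}$.

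For (c), write $a_{n,k}$ for the number of totally isotropic $k$-spaces in a rank-$n$ polar space of parameter $e$. The key input is the residue: for an isotropic point $P$, the form descends to a nondegenerate form on $P^\perp/P$, turning it into a polar space of rank $n-1$ with the \emph{same} parameter $e$, and the map $U\mapsto U/P$ is a bijection between isotropic $k$-spaces through $P$ and isotropic $(k-1)$-spaces of the residue. Double-counting incident pairs $(P,U)$ with $P$ an isotropic point contained in an isotropic $k$-space $U$ then gives, via (a) applied inside $U$, the identity $a_{n,k}\cdot\frac{p^k-1}{p-1}=a_{n,1}\cdot a_{n-1,k-1}$. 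With the base cases $a_{n,0}=1$ and $a_{n,1}=\frac{p^n-1}{p-1}\,(p^{n+e}+1)$ for the number of isotropic points, an induction on $k$ (telescoping the products and recombining the $p$-binomial coefficients) produces the claimed formula $\pbin{n}{k}\prod_{i=0}^{k-1}(p^{n-i+e}+1)$.

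Finally, (d) follows from (c) by the same residue applied to a $t$-space $V$: the quotient $V^\perp/V$ is a rank-$(n-t)$ polar space of parameter $e$, and $U\mapsto U/V$ identifies the isotropic $k$-spaces through $V$ with the isotropic $(k-t)$-spaces of $V^\perp/V$, so (c) with $(n,k)$ replaced by $(n-t,k-t)$ gives $\pbin{n-t}{k-t}\prod_{i=0}^{k-t-1}(p^{n-t-i+e}+1)$. I expect the only real obstacle to be the base count $a_{n,1}$ of isotropic points: this is where the six types genuinely differ and must be handled either by a Witt-decomposition recursion ($V=H\perp V'$ with $H$ a hyperbolic plane and $V'$ of rank $n-1$) or type by type. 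The parameter $e$ is exactly the bookkeeping device that renders all six cases uniform---note that at rank $1$ the formula gives $a_{1,1}=p^{e+1}+1$, matching the defining property of $e$---so once $a_{n,1}$ is in hand every remaining step is uniform and purely combinatorial.
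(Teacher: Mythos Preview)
The paper does not supply its own proof of this lemma; it is quoted verbatim from \cite[Lemma~9.3.2, Lemma~9.4.1, Lemma~9.4.2]{BrouwerCohenNeumaier} and used as a black box. So there is nothing in the paper to compare against.

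That said, your outline is correct and is essentially the classical argument one finds in the cited reference. Parts (a) and (b) are standard; your recursion $a_{n,k}\,\pbin{k}{1}=a_{n,1}\,a_{n-1,k-1}$ via the point residue, together with the base value $a_{n,1}=\pbin{n}{1}(p^{n+e}+1)$, does telescope to~\eqref{eq:numKspaces}, and the $t$-space residue gives~\eqref{eq:numKsubspaces} from~\eqref{eq:numKspaces}. Your own diagnosis is accurate: the only genuinely type-dependent computation is the isotropic-point count $a_{n,1}$, which you would still need to carry out (either via a hyperbolic splitting recursion or case by case) to turn the sketch into a complete proof; the parameter $e$ then absorbs the six cases uniformly, exactly as you note. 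Nothing in your plan is wrong or circular.
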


\section{The KLP theorem}\label{sec:KLP}

In this section, we describe the main theorem of~\cite{KLP}. Let $X$ be a finite set and let $L$ be a $\QQ$-linear subspace of functions $f\colon X\rightarrow\QQ$. We are interested in subsets $Y$ of $X$ satisfying
\begin{align}\label{eq:defDesign1}
	\frac{1}{\abs{Y}}\sum_{x\in Y} f(x)=\frac{1}{\abs{X}}\sum_{x\in X}  f(x)\quad\text{for all }f\in L.
\end{align}
An \emph{integer basis} of $L$ is a basis of $L$ in which all elements are integer-valued functions. Let $\{\phi_a\mid a\in A\}$ be an integer basis of $L$, where $A$ is an index set. Then a subset~$Y$ of~$X$ satisfies~\eqref{eq:defDesign1} if and only if
\begin{align}\label{eq:defDesign2}
	\frac{1}{\abs{Y}}\sum_{x\in Y} \phi_a(x)=\frac{1}{\abs{X}}\sum_{x\in X}  \phi_a(x)\quad\text{for all }a\in A.
\end{align}
The KLP theorem guarantees the existence of small subsets $Y$ of $X$ with the property~\eqref{eq:defDesign2}, once the vector space~$L$ satisfies the following five conditions (C1)--(C5).

\begin{enumerate}[label=(C\arabic*), font=\itshape, leftmargin=3em]
	\item \textit{Constant Function.}
	All constant functions belong to $L$, which means that every such function can be written as a rational linear combination of the basis functions $\phi_a$ with $a\in A$.
	\item \textit{Symmetry.}
	A permutation $\pi\colon X\to X$ is called a \emph{symmetry} of $L$ if $\phi_a\circ\pi$ lies in $L$ for all $a\in A$. The set of symmetries of $L$ forms a group called the \emph{symmetry group} of~$L$. The symmetry condition requires that the symmetry group of~$L$ acts transitively on $X$, which means that for all $x_1,x_2\in X$, there exists a symmetry $\pi$ such that $x_1=\pi(x_2)$.
	\item \textit{Divisibility.}
	There exists a positive integer $c_1$ such that, for all $a\in A$, there exists $\alpha\in\ZZ^X$ (with $\alpha=(\alpha_x)_{x\in X}$) satisfying
	\[
	\frac{c_1}{\abs{X}}\sum_{x\in X} \phi_a(x)=\sum_{x\in X} \alpha_x \phi_a(x)\quad\text{for all $a\in A$}.
	\]
	The smallest positive integer $c_1$ for which this identity holds is called the \emph{divisibility constant} of~$L$.
	\item \textit{Boundedness of $L$.} 
	The  $\ell_\infty$-norm of a function $g\colon X\rightarrow\QQ$ is given by 
	\[
	\norm{g}_\infty=\max_{x\in X} \lvert g(x)\rvert.
	\]
	The vector space $L$ has to be bounded in the sense that there exists a positive integer~$c_2$ such that $L$ has a $c_2$-bounded integer basis in $\ell_\infty$.
	\item \textit{Boundedness of $L^\perp$.} 
	The $\ell_1$-norm of a function $g\colon X\rightarrow \QQ$ is given by
	\[
	\norm{g}_1=\sum_{x\in X} \lvert g(x)\rvert.
	\]
	The orthogonal complement
	\[
	L^\perp=\left\{g\colon X\rightarrow\QQ\;\Bigg\vert\;\sum_{x\in X} f(x)g(x)=0\;\text{for all }f\in L\right\}
	\]
	of $L$ has to be bounded in the sense that $L^\perp$ has a $c_3$-bounded integer basis in~$\ell_1$.
\end{enumerate}

We can now state the KLP theorem.
\begin{namedtheorem}{KLP theorem}[{\cite[Theorem~2.4]{KLP}}]
	Let $X$ be a finite set and let $L$ be a $\QQ$-linear subspace of functions $f\colon X\rightarrow \QQ$ satisfying the conditions (C1)--(C5) with the corresponding constants $c_1,c_2,c_3$. Let $N$ be an integral multiple of~$c_1$ with
	\[
	\min(N, \abs{X}-N)\geq C\,c_2c_3^2 (\dim L)^6 \log(2c_3\dim L)^6,
	\]
	where $C>0$ is a constant. Then there exists a subset $Y$ of $X$ of size $\abs{Y}=N$ such that
	\[
	\frac{1}{\abs{Y}}\sum_{x\in Y} f(x)=\frac{1}{\abs{X}}\sum_{x\in X}  f(x)\quad\text{for all }f\in L.
	\]
\end{namedtheorem}

We close this section with a useful criterion for the verification of (C5) from~\cite{KLP}. An integer basis $\{\phi_a\mid a\in A\}$ of~$L$ is \emph{locally decodable with bound $c_4$} if there exist functions $\gamma_a\colon X\rightarrow\ZZ$ with $\norm{\gamma_a}_1\leq c_4$ for all $a\in A$ such that
\begin{equation}\label{eq:localdecodability}
	\sum_{x\in X}\gamma_a(x)\phi_{a'}(x)=m\delta_{a,a'}\quad\text{for all $a,a'\in A$}
\end{equation}
for some integer $m\geq 1$ with $|m|\leq c_4$, where $\delta_{a,a'}$ denotes the Kronecker $\delta$-function. 
\begin{lem}[{\cite[Claim~3.2]{KLP}}]\label{lem:locally_decodable_condition}
	Suppose that $\{\phi_a\mid a\in A\}$ is a $c_2$-bounded integer basis in~$\ell_\infty$ of~$L$ that is locally decodable with bound $c_4$. Then $L^\perp$ has a $c_3$-bounded integer basis in $\ell_1$ with $c_3\leq2c_2c_4\abs{A}$.
\end{lem}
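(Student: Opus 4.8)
The plan is to convert the locally decodable system $\{\gamma_a\}$ into an explicit integer-valued spanning set of $L^\perp$ and then thin it out to a basis. The point of the hypothesis $\sum_{x\in X}\gamma_a(x)\phi_{a'}(x)=m\delta_{a,a'}$ is that, up to the scalar $m$, the $\gamma_a$ form a dual family to the $\phi_{a'}$, and this lets us write down (an integral multiple of) the projection onto $L^\perp$ coordinate by coordinate. Concretely, for each $x\in X$ I would set
\[
g_x(y)=m\,\delta_{x,y}-\sum_{a\in A}\gamma_a(y)\,\phi_a(x)\qquad(y\in X),
\]
which is integer-valued since $m$, the $\gamma_a$, and the $\phi_a$ all are. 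Throughout I write $\langle f,g\rangle=\sum_{x\in X}f(x)g(x)$ for the pairing defining $L^\perp$.

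First I would check that $g_x\in L^\perp$, i.e.\ that $\langle g_x,\phi_{a'}\rangle=0$ for every $a'\in A$. Expanding and interchanging the order of summation, the $\delta_{x,y}$ term contributes $m\,\phi_{a'}(x)$, while the remaining term equals $\sum_{a\in A}\phi_a(x)\sum_{y\in X}\gamma_a(y)\phi_{a'}(y)=\sum_{a\in A}\phi_a(x)\,m\delta_{a,a'}=m\,\phi_{a'}(x)$ by local decodability, so the two cancel. It is essential here that the summation index $y$ appears in $\gamma_a(y)$ (not $\gamma_a(x)$); this is exactly what makes the inner sum collapse via the decoding identity rather than through the Gram matrix $\sum_y\phi_a(y)\phi_{a'}(y)$ of the basis, over which we have no control.

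Next I would show that $\{g_x\mid x\in X\}$ spans $L^\perp$. For an arbitrary $g\in L^\perp$ I would verify the reconstruction formula $g=\tfrac1m\sum_{x\in X}g(x)\,g_x$ directly: on the right-hand side the diagonal term reproduces $g(y)$, and the off-diagonal term is $\tfrac1m\sum_{a\in A}\gamma_a(y)\langle g,\phi_a\rangle$, which vanishes because $g\perp\phi_a$ for all $a$. Since every $g_x$ already lies in $L^\perp$, this shows the $g_x$ span $L^\perp$, and a maximal $\QQ$-linearly independent subfamily is then an integer basis of $L^\perp$. (The family $\{g_x\}$ has $\abs{X}$ members whereas $\dim L^\perp=\abs{X}-\abs{A}$, so some thinning is genuinely needed, but discarding functions costs nothing in the norm bound.)

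Finally, the norm estimate is routine: by the triangle inequality
\[
\norm{g_x}_1\le m+\sum_{a\in A}\abs{\phi_a(x)}\,\norm{\gamma_a}_1\le c_4+\abs{A}\,c_2\,c_4\le 2c_2c_4\abs{A},
\]
where the middle step uses $m\le c_4$, $\abs{\phi_a(x)}\le\norm{\phi_a}_\infty\le c_2$, and $\norm{\gamma_a}_1\le c_4$, and the last inequality uses $c_2,\abs{A}\ge 1$ to absorb the leading $c_4$. Hence $L^\perp$ has a $c_3$-bounded integer basis in $\ell_1$ with $c_3\le 2c_2c_4\abs{A}$. I expect no serious obstacle here: essentially all of the content lies in guessing the correct formula for $g_x$, after which the three verifications—membership in $L^\perp$, spanning, and the $\ell_1$-bound—are short and mechanical.
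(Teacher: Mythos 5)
Your proof is correct: the functions $g_x(y)=m\,\delta_{x,y}-\sum_{a\in A}\gamma_a(y)\phi_a(x)$ are integer-valued, lie in $L^\perp$, span it via the reconstruction identity $g=\tfrac1m\sum_x g(x)g_x$ for $g\in L^\perp$, and satisfy $\norm{g_x}_1\le m+c_2c_4\abs{A}\le 2c_2c_4\abs{A}$. The paper itself gives no proof (it imports the statement as Claim~3.2 of~\cite{KLP}), and your construction is essentially the same dual-family argument used there, so there is nothing to add.
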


\section{Proof of Theorem~\ref{thm:main}}\label{sec:proofmainresult}
In this section, we prove Theorem~\ref{thm:main} using the KLP theorem. Not surprisingly, our proof proceeds along similar lines as the proof given in~\cite{FLV} for designs over finite fields. First, we put the definition of a design in a polar space in the framework of the KLP theorem by specifying the underlying vector space $L$. Then we show that $L$ satisfies the required conditions (C1)--(C5) of the KLP theorem with suitable constants. This will establish the existence of nontrivial designs in polar spaces.

Let $\cP$ be a polar space of rank $n$ and let $t,k$ be positive integers with $t\leq k\leq n$. In the following, we assume that $t+k\leq n$. Let $X$ be the set of $k$-spaces in $\cP$ and let $A$ be the set of $t$-spaces in $\cP$. For $V\in A$, define $\phi_V\colon X\to\QQ$ by
\[
\phi_V(U)=\begin{cases}
	1&\text{if $V\subseteq U$,}\\
	0&\text{otherwise}.
\end{cases}
\]
Let $L$ be the $\QQ$-span of $\{\phi_V\mid V\in A\}$. Now, a subset $Y$ of $X$ satisfies \eqref{eq:defDesign2} if and only if
\[
\frac{1}{|Y|}\;|\{U\in Y\mid V\subseteq U\}|=\frac{1}{|X|}\;|\{U\in X\mid V\subseteq U\}|
\]
for all $V\in A$. Hence, \eqref{eq:defDesign2} holds if and only if $Y$ is a $t$-$(n,k,\lambda)$ design in $\cP$, where
\[
\lambda=\frac{|Y|}{|X|}\;|\{U\in X\mid V\subseteq U\}|
\]
for all $V\in A$.
\subsection{Conditions (C1)--(C5)} In what follows, we will show that $L$ satisfies the conditions (C1)--(C5) and establish the corresponding constants. Afterwards, we will deduce Theorem~\ref{thm:main} from the KLP theorem.

\subsubsection*{(C1) Constant vector}
For all $U\in X$, we have
\[
\sum_{V\in A} \phi_V(U)=|\{V\in A\mid V\subseteq U\}|=\qbin{k}{t}
\]
since every subspace of a totally isotropic space is again totally isotropic. This gives
\[
\frac{1}{\qbin{k}{t}}\sum_{V\in A} \phi_V(U)=1
\]
for all $U\in X$, and the space $L$ thus contains the constant function.

\subsubsection*{(C2) Symmetry} 
Let $G$ be the group associated to $\cP$ as given in Table~\ref{table:polarspaces}. The group $G$ acts on $X$ by mapping a $k$-space $U=\langle u_1,\dots,u_k\rangle$ via $g\in G$ to $g(U)=\langle g(u_1),\dots,g(u_k)\rangle$. Similarly, $G$ acts on $A$. We show that $G$ is a subgroup of the symmetry group of~$L$. For a given $g\in G$, consider the permutation $\sigma$ of $A$ and the permutation $\pi$ of $X$, both induced by $g$. Then, for all $V\in A$ and all $U\in X$, we have
\begin{align*}
	(\phi_{\sigma(V)}\circ \pi) (U)
	=\phi_{\sigma(V)} (\pi(U))
	=\begin{cases}
		1&\text{if $\sigma(V)\subseteq\pi(U)$}\\
		0&\text{otherwise}
	\end{cases}
	=\begin{cases}
		1&\text{if $V\subseteq U$}\\
		0&\text{otherwise}
	\end{cases}.
\end{align*}
Hence, we obtain $(\phi_{\sigma(V)}\circ \pi) (U)=\phi_{V}(U)$ for all $U\in X$ giving $\phi_{\sigma(V)}\circ\pi\in L$. Since $\sigma$ is a permutation of $A$, we have $\phi_V\circ\pi\in L$ for all $V\in A$. Thus, the group $G$ is a subgroup of the symmetry group of $L$. It is well known that $G$ acts transitively on $X$, which establishes the symmetry condition.

\subsubsection*{(C4) Boundedness of $L$}
The space $L$ is spanned by the set $\{\phi_V\mid V\in A\}$ consisting of integer-valued functions, which are $1$-bounded in $\ell_\infty$. Therefore, there exists a $c_2$-bounded integer basis of $L$ with $c_2=1$.

\subsubsection*{(C5) Boundedness of $L^\perp$}
We will show that $L$ has a locally decodable spanning set with bound $c_4$. This is achieved by considering \eqref{eq:localdecodability} as a linear system of equations with the unknowns $\gamma_V(U)$ and showing that the system has a suitable integer solution. Together with Lemma~\ref{lem:locally_decodable_condition}, the local decodability then implies the required boundedness of $L^\perp$.

Fix a $t$-space $V$ in $A$ and a $(k+t)$-space $W$ in $\cP$ with $V\subset W$. Let $\gamma_V\colon X\to\ZZ$ with $\gamma_V(U)=0$ for all $U\not\subset W$ and
\begin{align}\label{eq:locdev1}
\sum_{U\in X} \gamma_V(U)\phi_{V'}(U)=m\delta_{V,V'}\quad\text{for all $V'\in A$},
\end{align}
where $m$ is a positive integer. We will see that $\gamma_V(U)$ depends only on the dimension of $U\cap V$. Therefore, we write $f_{k,t}(\dim(U\cap V))=\gamma_V(U)$. Hence, \eqref{eq:locdev1} becomes
\[
\sum_{\substack{U\subset W\\ \dim(U)=k}} f_{k,t}(\dim(U\cap V)) \phi_{V'}(U)=m\delta_{V,V'}\quad\text{for all $V'\in A$}.
\]
First, for $V'=V$, we obtain
\[
\sum_{\substack{U\subset W\\ \dim(U)=k}} f_{k,t}(\dim(U\cap V)) \phi_{V}(U)=m,
\]
and thus
\[
f_{k,t}(t)\cdot|\{U\in \cP\mid \dim(U)=k, V\subseteq U\subset W\}|=m.
\]
Since every subspace of $W$ is totally isotropic, the wanted number of $k$-spaces~$U$ is given by $\qbin{k+t-t}{k-t}=\qbin{k}{t}$ due to Lemma~\ref{lem:sum_numspaces}~(b). Hence, we require
\begin{align}\label{eq:lineareq_m}
f_{k,t}(t)\qbin{k}{t}=m.
\end{align}
Second, for every $V'\in A$ with $V'\neq V$, we want
\[
\sum_{\substack{U\subset W\\ \dim(U)=k}} f_{k,t}(\dim(U\cap V)) \phi_{V'}(U)=0,
\]
which becomes
\begin{align}\label{eq:otherlineareq_1}
\sum_{\substack{V'\subseteq U\subset W\\ \dim(U)=k}} f_{k,t}(\dim(U\cap V))=0,
\end{align}
where the sum is over all allowed $U$. Therefore, we only need to consider those $V'$ that are contained in $W$. 

To further evaluate the sum~\eqref{eq:otherlineareq_1}, we apply the following lemma, which was proven for subspaces in a general vector space over a finite field in~\cite{FLV}. However the lemma also holds for subspaces in a polar space since $W$ is totally isotropic and so are all its subspaces.
\begin{lem}[{\cite[Lemma~5]{FLV}}]\label{lem:numsubspaces}
	Let $W$ be a $(k+t)$-space in a polar space $\cP$ of rank $n$. Let $V$ and $V'$ be two distinct $t$-subspaces of $W$ such that $\dim(V\cap V')=\ell$ for some $\ell\in\{0,1,\dots,t-1\}$. Then the number of $k$-subspaces $U$ of $W$ such that $V'\subseteq U$ and $\dim(U\cap V)=j$ for some $j\in\{\ell, \ell+1, \dots, t\}$ is given by
	\begin{align*}
	q^{(t-j)(k-t-j+\ell)}\qbin{t-\ell}{j-\ell}\qbin{k+\ell-t}{j}.
	\end{align*}
\end{lem}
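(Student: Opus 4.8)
The plan is to reduce the counting problem entirely to linear algebra inside $W$. The crucial observation is that the form plays no role here: since $W$ is a $(k+t)$-space lying in $\cP$, it is totally isotropic, and hence so is every subspace of $W$. Consequently the $k$-subspaces $U$ of $W$ that belong to $\cP$ are exactly the $k$-dimensional subspaces of the vector space $W$, with no isotropy restriction, and likewise none on $V,V'$. Thus the quantity to be counted coincides with the corresponding count of $k$-subspaces of an abstract $(k+t)$-dimensional $\FF_p$-vector space, which is precisely the setting treated in \cite{FLV}, so one may simply quote their result. For completeness I would also give the short direct computation below.

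First I would pass to the quotient $\overline{W}=W/V'$, which has dimension $k$. Since every admissible $U$ contains $V'$, the assignment $U\mapsto\overline{U}=U/V'$ is a bijection between the $k$-subspaces of $W$ containing $V'$ and the $(k-t)$-subspaces of $\overline{W}$. Under the same map, $V$ has image $\overline{V}=(V+V')/V'$ of dimension $t-\ell$, because $V\cap V'$, the kernel of the restriction $V\to\overline{W}$, has dimension $\ell$.

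Next I would translate the intersection condition $\dim(U\cap V)=j$ into the quotient. The key identities are $V\cap V'\subseteq V'\subseteq U$, whence $(U\cap V)\cap V'=V\cap V'$ has dimension $\ell$, together with $U\cap(V+V')=(U\cap V)+V'$ (if $x=v+v'\in U$ with $v\in V$, $v'\in V'$, then $v=x-v'\in U\cap V$). Combining these gives $\dim\bigl(\overline{U}\cap\overline{V}\bigr)=\dim\bigl(U\cap(V+V')\bigr)-t=\bigl(j+t-\ell\bigr)-t=j-\ell$. This reduces the claim to counting the $(k-t)$-subspaces $\overline{U}$ of the $k$-space $\overline{W}$ meeting the fixed $(t-\ell)$-subspace $\overline{V}$ in dimension exactly $j-\ell$. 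Applying the standard formula for the number of $d$-subspaces of an $m$-space meeting a fixed $r$-subspace in dimension exactly $s$, namely $p^{(d-s)(r-s)}\pbin{r}{s}\pbin{m-r}{d-s}$, with $m=k$, $r=t-\ell$, $d=k-t$, $s=j-\ell$, and then using the symmetry $\pbin{a}{b}=\pbin{a}{a-b}$ to rewrite $\pbin{k+\ell-t}{k-t-j+\ell}$ as $\pbin{k+\ell-t}{j}$, yields the stated expression $p^{(t-j)(k-t-j+\ell)}\pbin{t-\ell}{j-\ell}\pbin{k+\ell-t}{j}$.

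The routine parts are the quotient bijection and the algebraic simplification of the exponent and the binomials. I expect the only genuinely delicate step to be the translation of the intersection dimension across the quotient, i.e.\ establishing $U\cap(V+V')=(U\cap V)+V'$ together with $(U\cap V)\cap V'=V\cap V'$, since this is where one must use $V'\subseteq U$ carefully; once this gives $\dim(\overline{U}\cap\overline{V})=j-\ell$, everything else follows from the standard subspace-intersection count. The admissible range $\ell\le j\le t$ also drops out here, from the chain $V\cap V'\subseteq U\cap V\subseteq V$.
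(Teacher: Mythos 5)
Your proposal is correct, and it reaches the formula by a slightly different route than the paper. Both arguments begin with the same two observations: that the form is irrelevant because every subspace of the totally isotropic $(k+t)$-space $W$ is automatically totally isotropic, and that the modular-law identity $U\cap(V+V')=V'+(U\cap V)$ (valid because $V'\subseteq U$) pins down $\dim\bigl(U\cap(V+V')\bigr)=t+j-\ell$. From there the paper carries out an explicit three-stage construction of $U$ --- choose a $(j-\ell)$-subspace $Z$ of $(V+V')/V'$, choose a $(k-t-j+\ell)$-subspace of $W/(V+V')$, and count the $p^{(t-j)(k-t-j+\ell)}$ ways to lift its basis vectors --- and multiplies the three counts. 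You instead quotient by $V'$ once, check that $\dim(\overline{U}\cap\overline{V})=j-\ell$, and invoke the standard formula $p^{(d-s)(r-s)}\pbin{r}{s}\pbin{m-r}{d-s}$ for subspaces of an $m$-space meeting a fixed $r$-space in dimension exactly $s$ (with $m=k$, $r=t-\ell$, $d=k-t$, $s=j-\ell$); the substitution and the symmetry $\pbin{k+\ell-t}{k-t-j+\ell}=\pbin{k+\ell-t}{j}$ check out, and your lift-counting is simply absorbed into the quoted formula. Your version is shorter and makes the bijection $U\leftrightarrow U/V'$ (including the converse direction, that $\dim(\overline{U}\cap\overline{V})=j-\ell$ forces $\dim(U\cap V)=j$) the only thing that needs verification, at the cost of relying on an external standard count; the paper's version is self-contained. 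One presentational caution: you identify the delicate step correctly, but in the writeup you should state explicitly that $(U/V')\cap\bigl((V+V')/V'\bigr)=\bigl(U\cap(V+V')\bigr)/V'$ requires $V'\subseteq U$, since that is exactly where the hypothesis enters.
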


By applying Lemma~\ref{lem:numsubspaces}, we obtain from~\eqref{eq:otherlineareq_1} that
\begin{align}\label{eq:otherlineareq_2}
	\sum_{j=\ell}^t f_{k,t}(j)\,q^{(t-j)(k-t-j+\ell)}\qbin{t-\ell}{j-\ell}\qbin{k+\ell-t}{j}=0\quad\text{for all $\ell=0,1,\dots,t-1$},
\end{align}
where $\ell=\dim(V\cap V')$. Combining \eqref{eq:lineareq_m} and \eqref{eq:otherlineareq_2} gives us a system of $t+1$ linear equations. We represent this system as a matrix product of the form
\[
Df=(0,\dots,0,m)^T,
\]
where $f=(f_{k,t}(0),f_{k,t}(1),\dots,f_{k,t}(t))^T$ and $D$ is a $(t+1)\times (t+1)$ matrix with the entries
\[
d_{\ell,j}=q^{(t-j)(k-t-j+\ell)}\qbin{t-\ell}{j-\ell}\qbin{k+\ell-t}{j}
\]
for all $\ell=0,1,\dots,t$ and $j=0,1,\dots,t$. Since $\qbin{t-\ell}{j-\ell}=0$ if $\ell>j$, the matrix $D$ is upper-triangular. Due to $t\leq k$, the main diagonal entries of $D$ are all nonzero. Therefore, the determinant of $D$ is nonzero and the system of linear equations is thus solvable. Applying Cramer's rule gives
\[
f_{k,t}(j)=\frac{\det (D_j)}{\det (D)} m,
\]
where $D_j$ is obtained from $D$ by replacing the $j$-th column of $D$ by $(0,\dots,0,1)^T$. We can set $m=\det(D)$ since the determinant of $D$ is an integer. This gives $f_{k,t}(j)=\det (D_j)$ and ensures that the coefficients $f_{k,t}(0),f_{k,t}(1), \dots, f_{k,t}(t)$ are all integers, as required.

To derive a bound on the constant $c_4$, we use $c_4=\max\{m,\norm{\gamma_V}_1\}$ and thus need to bound the determinants of $D$ and $D_j$. This was already done in \cite{FLV}.

\begin{lem}[{\cite[Lemma~6]{FLV}}]\label{lem:boundsdets}
	Let $D$ and $D_j$ be defined as above for $j=0,1,\dots,t$. Then we have
	\begin{align}
	|\det (D)|&\leq q^{k(t+1)^2}\label{eq:boundDetD}\\
	|\det(D_j)|&\leq q^{k(t+1)^2}\quad\text{for all $j=0,1,\dots,t$}.\label{eq:boundDetDj}
	\end{align}
\end{lem}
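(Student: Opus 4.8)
The plan is to reduce both determinant bounds to a single uniform estimate on the entries $d_{\ell,j}$ of $D$. The starting point is the elementary bound on $q$-binomial coefficients
\[
\pbin{a}{b}\le C\,p^{b(a-b)},\qquad C=\prod_{i\ge 1}\bigl(1-p^{-i}\bigr)^{-1}<4,
\]
which follows by extracting the dominant power of $p$ from each factor $\frac{p^{a-i+1}-1}{p^{i}-1}$ of the defining product and bounding the remaining factors by $C$. Applying this to both $q$-binomials occurring in
\[
d_{\ell,j}=p^{(t-j)(k-t-j+\ell)}\pbin{t-\ell}{j-\ell}\pbin{k+\ell-t}{j}
\]
and collecting the exponent of $p$, the three contributions $(t-j)(k-t-j+\ell)$, $(j-\ell)(t-j)$, and $j(k+\ell-t-j)$ telescope to the remarkably simple value $t(k-t)+j(\ell-j)$. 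Since $D$ is upper-triangular, every \emph{nonzero} entry has $\ell\le j$, so the cross term $j(\ell-j)$ is nonpositive and hence $\abs{d_{\ell,j}}\le C^2\,p^{t(k-t)}$ for all $\ell,j$ (zero entries satisfy this trivially). This uniform entry bound drives both parts.

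For \eqref{eq:boundDetD} I would use that $D$ is upper-triangular, so $\det(D)=\prod_{\ell=0}^{t}d_{\ell,\ell}$ is a product of $t+1$ diagonal entries; each diagonal entry involves only a single $q$-binomial and is therefore bounded by $C\,p^{t(k-t)}$. Thus $\abs{\det(D)}\le C^{t+1}p^{(t+1)t(k-t)}$, and the claim follows from $(t+1)t(k-t)\le(t+1)^2k$, the leftover slack $(t+1)(k+t^2)$ in the exponent being more than enough to absorb $\log_p\!\bigl(C^{t+1}\bigr)$.

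For \eqref{eq:boundDetDj} I would first observe that the replaced $j$-th column of $D_j$ equals the standard basis vector $e_t$, i.e.\ it is zero except for a $1$ in the last row. A Laplace expansion along this column reduces $\det(D_j)$ to $\pm$ the $t\times t$ minor of $D$ obtained by deleting the last row and the $j$-th column. Bounding this minor through its Leibniz expansion, each of the at most $t!$ summands is a product of $t$ entries of $D$, and every nonzero such product has all its factors bounded by $C^2p^{t(k-t)}$; hence $\abs{\det(D_j)}\le t!\,C^{2t}p^{t^2(k-t)}$. Since $t^2(k-t)\le(t+1)^2k$ with slack $(2t+1)k+t^3$ in the exponent, and this slack dominates $\log_p\!\bigl(t!\,C^{2t}\bigr)=O(t\log t)$, the bound $p^{k(t+1)^2}$ follows.

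The main obstacle is the uniform entry bound itself, and specifically recognizing the telescoping that collapses the three separate exponents to $t(k-t)+j(\ell-j)$. A naive estimate that replaces each entry by the largest value a single entry can attain is far too lossy: the factor $\pbin{k+\ell-t}{j}$ alone can be of size $p^{t(k-t)}$, and multiplying crude per-factor bounds across a permutation would produce an exponent of order $3t(t+1)k$, overshooting $k(t+1)^2$. The decisive point is that on the upper-triangular support the cross term $j(\ell-j)$ is nonpositive, so each nonzero entry is genuinely no larger than the diagonal scale $p^{t(k-t)}$; this is exactly what keeps the product of $t$ (respectively $t+1$) entries below $p^{k(t+1)^2}$ and leaves ample room for the combinatorial factor $t!$ and the constant $C$.
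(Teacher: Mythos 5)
Your proof is correct: the telescoping of the exponent to $t(k-t)+j(\ell-j)$ checks out, the sign of the cross term on the upper-triangular support gives the uniform entry bound $C^2p^{t(k-t)}$, and the slack computations $(t+1)(k+t^2)$ and $(2t+1)k+t^3$ do absorb the constants $C^{t+1}$ and $t!\,C^{2t}$ for all $p\ge 2$ and $1\le t\le k$. The paper itself does not prove this lemma but imports it as~\cite[Lemma~6]{FLV}, and your argument (triangularity for $\det D$, Laplace expansion along the replaced column for $\det D_j$, entry bounds via $\pbin{a}{b}\le Cp^{b(a-b)}$) is essentially the same route taken there, so there is nothing further to reconcile.
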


Since $\gamma_V(U)=0$ if $U\not\subset W$, we have
\[
\norm{\gamma_V}_1 = \sum_{U\in X} \lvert\gamma_V(U)\rvert \leq|\{U\in X\mid U\subset W\}|\; \max_{U\in X} \lvert\gamma_V(U)\rvert=\qbin{k+t}{k}\max_j \lvert f_{k,t}(j)\rvert.
\]
By using $|f_{k,t}(j)|=|\det(D_j)|\leq q^{k(t+1)^2}$ due to~\eqref{eq:boundDetDj} and the well-known bound
\begin{align}\label{eq:boundqbin}
	\qbin{n}{k}\leq 4 q^{k(n-k)}
\end{align}
(see \cite[Lemma~4]{KoetterKschischang}), we obtain
\begin{align*}
\norm{\gamma_V}_1
\leq \qbin{k+t}{k} q^{k(t+1)^2}
\leq 4 q^{kt+k(t+1)^2}.
\end{align*}
Using $m=\det(D)$ and \eqref{eq:boundDetD}, we deduce
\[
c_4=\max\{m,\norm{\gamma_V}_1\}\leq 4q^{kt+k(t+1)^2}.
\]
In conclusion, we established the local decodability of the spanning set $\{\phi_V\mid V\in A\}$ with bound $c_4$. Moreover, by~\eqref{eq:numKspaces}, we have
\[
|A|=\qbin{n}{t}\prod_{i=0}^{t-1} (q^{n-i+e}+1).
\]
Applying~\eqref{eq:boundqbin} gives
\begin{align*}
	|A|	\leq 4 q^{(n+e)t-\binom{t}{2}+t(n-t)}\prod_{i=0}^{t-1} \left( 1+\frac{1}{q^{n-i+e}}\right).
\end{align*}
Since it holds that
\[
\prod_{i=0}^{t-1}\left(1+\frac{1}{q^{n-i+e}}\right)<\frac52
\]
(see, e.g., \cite[Lemma~3.6]{SW_Steiner}), we obtain
\begin{align}\label{eq:boundA}
	|A|\leq 10 q^{(n+e)t-\binom{t}{2}+t(n-t)}
	\leq 10 q^{2nt}.
\end{align}
 Lemma~\ref{lem:locally_decodable_condition} then implies the boundedness of $L^\perp$ with
 \[
 c_3\leq 2c_2c_4|A|\leq 80q^{2nt+kt+k(t+1)^2}.
 \]

\subsubsection*{(C3) Divisibility}
By using the local decodability
\[
\sum_{U\in X} \gamma_V(U) \phi_{V'}(U)=m\delta_{V,V'}\quad\text{for all $V,V'\in A$},
\]
we can establish the divisibility condition in the following way. Since $\ZZ^A$ is equipped with the standard basis $\{e^V\mid V\in A\}$, where $e_{V'}^V=\delta_{V,V'}$ for all $V,V'\in A$, we obtain
\[
\sum_{U\in X} \gamma_V(U) \phi(U)=me^V
\]
with $\phi(U)=(\phi_V(U))_{V\in A}$. This implies
\[
m\ZZ^A=\left\{ \sum_{U\in X} \alpha_U\, \phi(U) \;\Bigg\vert\; \alpha_U\in \ZZ \right\}.
\]
Moreover by combining~\eqref{eq:numKspaces} and~\eqref{eq:numKsubspaces}, we obtain
\begin{align}\label{eq:sum_divcond}
\frac{1}{|X|}\sum_{U\in X} \phi_V(U)=\frac{1}{|X|}\;|\{U\in X\mid V\subseteq U\}|=\frac{\qbin{n-t}{k-t}\prod\limits_{i=0}^{k-t-1} (q^{n-t-i+e}+1)}{\qbin{n}{k}\prod\limits_{i=0}^{k-1} (q^{n-i+e}+1)}.
\end{align}
Hence, we have
\begin{align*}
\frac{1}{|X|}\sum_{U\in X} \phi_V(U)
=\frac{\qbin{k}{t}}{\qbin{n}{t}\prod\limits_{i=0}^{t-1}(q^{n-i+e}+1)}.
\end{align*}
Therefore, it holds
\[
\qbin{n}{t}\left(\prod\limits_{i=0}^{t-1}(q^{n-i+e}+1)\right) \frac{1}{|X|}\sum_{U\in X} \phi(U)=\qbin{k}{t} (1,\dots,1).
\]
Thus, there exists a positive integer $c_1$ with
\[
c_1\leq m\qbin{n}{t}\prod\limits_{i=0}^{t-1}(q^{n-i+e}+1)
\]
such that
\[
\frac{c_1}{|X|}\sum_{U\in X} \phi(U)\in m\ZZ^A.
\]
The divisibility condition is therefore satisfied. Observe that $c_1\leq |\det(D)|\,|A|$. Hence, from Lemma~\ref{lem:boundsdets} and~\eqref{eq:boundA}, we find that
\[
c_1\leq |\det(D)|\,|A|\leq 10q^{2nt+k(t+1)^2}.
\]
\subsection{Applying the KLP theorem}
In the previous section, we have verified that the space $L$ satisfies all conditions of the KLP theorem and obtained the following bounds on the constants:
\begin{align}\label{eq:constantsc1c2c3}
	c_1\leq 10 q^{2nt+k(t+1)^2},\quad
	c_2=1,\quad
	c_3\leq 80q^{2nt+kt+k(t+1)^2}.
\end{align}
By~\eqref{eq:boundA}, we also have
\begin{align}\label{eq:boundDimV}
	\dim L\leq |A|&\leq 10 q^{2nt}.
\end{align}
Moreover, due to standard lower bound $\qbin{n}{k}\geq q^{k(n-k)}$ (see, e.g., \cite[Lemma~4]{KoetterKschischang}), we obtain
\begin{align}\label{eq:bound_B}
	|X|
	=\qbin{n}{k}\prod_{i=0}^{k-1} (q^{n-i+e}+1)
	\geq q^{k(n-k)+k(n+e)-\binom{k}{2}}
	\geq q^{2nk-\frac32 k^2}.
\end{align}
Using \eqref{eq:constantsc1c2c3} and \eqref{eq:boundDimV}, the lower bound on $N$ in the KLP theorem is thus at most
\begin{align}\label{eq:finalineq}
	c'c_2c_3^3 (\dim L)^7\leq c q^{20nt+3kt+3k(t+1)^2}
\end{align}
for some constants $c,c'>0$. For fixed $k$ and $t$, the right-hand side of \eqref{eq:finalineq} is bounded by $cq^{21nt}$ if $n$ is large enough, namely, if $n\geq \tilde{c} k^2$ for a large enough constant $\tilde{c}>0$. Due to \eqref{eq:bound_B}, the term $cq^{21nt}$ is strictly less than $|X|$ whenever $k>\frac{21}{2}t$.

The KLP theorem now implies that for $k>\frac{21}{2}t$ and $n\geq \tilde{c} k^2$ with a large enough constant~$\tilde{c}>0$, a $t$-$(n,k,\lambda)$ design in $\cP$ of size $N\leq q^{21nt}$ exists, which proves Theorem~\ref{thm:main}.

\printbibliography

\end{document}